\newtheorem{theorem}{Theorem}
\newtheorem*{theorem*}{Theorem}
\newtheorem{proposition}[theorem]{Proposition}
\newtheorem{lemma}[theorem]{Lemma}
\theoremstyle{definition}
\newcommand{\N}{\mathbf{N}}
\newcommand{\Z}{\mathbf{Z}}
\newcommand{\e}{\mathrm{e}}
\renewcommand{\epsilon}{\varepsilon}
\DeclareMathOperator{\Sym}{Sym}
\renewcommand{\theta}{\vartheta}
\newcommand{\mfrac}[2]{{\textstyle\frac{#1}{#2}}}
\newcounter{thmlistcnt}
	{\setcounter{thmlistcnt}{0}%
	\begin{list}{\emph{(\roman{thmlistcnt})}}{%
		\usecounter{thmlistcnt}%
		\setlength{\topsep}{0pt}%
		\setlength{\leftmargin}{0pt}%
		\setlength{\itemsep}{0pt}%
		\setlength{\labelwidth}{17pt}
		\setlength{\itemindent}{30pt}}%
	}%
	{\end{list}}%
\newcommand{\hf}[1]{\texttt{#1}}
\begin{document}
\newgeometry{margin=1.35in}
\title[]{Computing derangement probabilities of the symmetric group acting on $k$-sets}
\author{John R.~Britnell and Mark Wildon}
\date{\today}
\subjclass[2010]{05A05, secondary: 05A17, 20B30}

\maketitle
\thispagestyle{empty}

\begin{abstract}
Let $i(\infty ,k)$ be the limiting proportion, as $n \rightarrow \infty$,
of permutations in the symmetric group of degree $n$ that fix a $k$-set. 
We give an algorithm for computing $i(\infty ,k)$
and state the values of $i(\infty ,k)$ for $k \le 30$. These values are consistent with a
conjecture of Peter Cameron that $i(\infty,k)$ is a decreasing function of $k$.
\end{abstract}

\section{Introduction}

The symmetric group $\Sym_n$ acts on the set of $k$-subsets of $\{1,\ldots, n\}$.
Let $i(n,k)$ be the proportion of permutations in $\Sym_n$ that fix at least
one such $k$-subset. Let $i(\infty, k) = \lim_{n \rightarrow \infty}
i(n,k)$. (We include below a short proof that this limit exists for all $k \in \N$.)
It was shown by Luczak and Pyber in \cite[\S3, Lemma]{LuczakPyber} that $i(n,k) < Ck^{-1/100}$ 
for some constant $C$, uniformly in $n$. Thus $\lim_{k \rightarrow \infty} i(\infty, k) = 0$.
Peter Cameron has conjectured \cite{CameronConj} that $i(\infty,k)$ is a decreasing
function of $k$. In this note we give an efficient algorithm for computing $i(\infty,k)$
and use it to prove that Cameron's conjecture is true for $k \le 30$.

We also compute the values of $i(n,k)$ for all $n \in \N$ such that $n \le 70$.
As a corollary, we find that if $2k \le n \le 70$ then $i(n,k) < i(n,k+1)$ if and only if 
\[ (n,k) \in \left\{ \begin{matrix}
(30,9),  (36,11), (39,12), (42, 13), (45, 14), (47,15),(48,15), \\
(51,16), (53,17), (54,17), (57,18),  (59,19),  (60,19), (63,20), \\
(64,21), (65,21), (66,21), (68,22), (69,22), (70,23) \end{matrix} \right\} \]
However it is consistent with our data
that $i(n,k) > i(n,k+1)$ for all $n$ and $k$ such that $k < n/4$, so these examples
do not rule out the approach to Cameron's conjecture through careful estimation of $i(n,k)$.
(Of course the choice of $n/4$ is slightly arbitrary: any function $f : \N \rightarrow \N$
such that $f(n) \rightarrow \infty$ as $n \rightarrow \infty$ and
 $i(n,k) > i(n,k+1)$ for all $n$ and $k$ with $k < f(n)$ would suffice.)
 
Another motivation for this note is recent work of Eberhard, Ford and Green \cite{EberhardFordGreen}.
The main theorem of \cite{EberhardFordGreen} states that there exist constants $A$ and $B$ such that 
\[ Ak^{-\delta}(1+ \log k)^{-3/2} \le  i(n,k) \le B k^{-\delta}(1+ \log k)^{-3/2} \]
for all $k$, $n \in \N$, where $\delta = 1 - \frac{1 + \log \log 2}{\log 2} \approx 0.0861$.
This paper cites earlier data collected by the present authors, using the algorithm
described below, that proves Cameron's conjecture for $k \le 23$.

\subsection*{Outline}
In \S 2 we recall the necessary background on cycle statistics in permutations. In \S 3 we describe
the `Derangement Table Algorithm' for computing $i(\infty,k)$. This algorithm was inspired by a method for
calculating $i(\infty ,k)$ by hand, shown to the authors by Peter M.~Neumann.
 In Appendix~A we discuss some features of the Haskell
implementation of this algorithm. Appendix B gives our data for $i(\infty,k)$ for $k \le 30$.
It is routine to compute $i(n,k)$ for small values of $n$ by exhausting over
all partitions of $n$. Appendices C and D give the values of $i(n,k)$ and $1-i(n,k)$ 
for $n \le 70$ and $k \le 35$.

\section{The limiting distribution of $k$-cycles in permutations}

Let $X^{(n)}_k(\pi)$ be the number of $k$-cycles in the permutation $\pi$, chosen
uniformly at random from $\Sym_n$.
Let $X_1, X_2, \ldots$ be independent Poisson random variables such that $X_k$ has mean $1/k$.
The following proposition is well known. It is proved as Theorem 1 in \cite{ArratiaTavare}.

\begin{proposition}\label{prop:limitdist}
Let $m \in \N$.
As $n \rightarrow \infty$, 
\[ (X^{(n)}_1, \ldots, X^{(n)}_m) \xrightarrow{\ \mathrm{dist}\ } (X_1, \ldots, X_m). \]
\end{proposition}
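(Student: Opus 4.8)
The plan is to prove this by the method of moments, exploiting the fact that the joint \emph{factorial} moments of cycle counts in a uniform random permutation have an exact closed form. For nonnegative integers $a_1,\ldots,a_m$ put $N = \sum_{k=1}^m k a_k$, and write $(x)_{(a)} = x(x-1)\cdots(x-a+1)$ for the falling factorial. The first step is to establish that
\[ \E\Bigl[\, \prod_{k=1}^m \bigl( X^{(n)}_k \bigr)_{(a_k)} \,\Bigr] = \prod_{k=1}^m \frac{1}{k^{a_k}} \qquad \text{whenever } N \le n. \]
I would prove this by a direct counting argument: the left-hand side equals the average over $\pi \in \Sym_n$ of the number of ways to choose, for each $k$, an ordered $a_k$-tuple of distinct $k$-cycles of $\pi$, these cycles then being automatically pairwise disjoint. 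The number of such configurations is $n!/\bigl((n-N)!\,\prod_k k^{a_k}\bigr)$ — pick an ordered list of $N$ distinct points, cut it into consecutive blocks of the prescribed sizes, read each block as a cycle, and divide by the $k$ cyclic rotations of each $k$-cycle — and each configuration is contained in exactly $(n-N)!$ permutations; dividing by $n!$ gives the stated value.

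The second step is to observe that $\prod_{k=1}^m k^{-a_k}$ is exactly the corresponding joint factorial moment of $(X_1,\ldots,X_m)$. Indeed, for $Y$ Poisson of mean $\lambda$ one has $\E[(Y)_{(a)}] = \lambda^a$, and since $X_1,\ldots,X_m$ are independent the joint factorial moments factor, giving $\E\bigl[\prod_k (X_k)_{(a_k)}\bigr] = \prod_k (1/k)^{a_k}$. Hence for each fixed $(a_1,\ldots,a_m)$ the factorial moment of $(X^{(n)}_1,\ldots,X^{(n)}_m)$ agrees with that of $(X_1,\ldots,X_m)$ for all $n \ge N$, so in particular it converges as $n \to \infty$. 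Since ordinary mixed moments are fixed finite linear combinations of factorial moments and vice versa, the ordinary mixed moments converge as well.

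The final step is to invoke the multivariate method of moments: an $\N^m$-valued random vector whose law is determined by its moments is the distributional limit of any sequence of $\N^m$-valued random vectors whose mixed moments converge to its mixed moments. The limit $(X_1,\ldots,X_m)$ is moment-determinate because each $X_k$ is (a Poisson law has a moment generating function finite in a neighbourhood of $0$) and the components are independent, so the conclusion follows. The one point that requires genuine care — the main obstacle — is the justification of this last implication: I would either cite a standard reference for the multivariate method of moments, or reduce to the one-dimensional case via the Cramér–Wold device, noting that every linear combination $\sum_k t_k X^{(n)}_k$ has mixed-moment-expressible moments that converge, with moment-determinate limit $\sum_k t_k X_k$ (a finite sum of independent Poissons, again with an everywhere-finite moment generating function). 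Everything else is elementary counting.
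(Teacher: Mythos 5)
Your proof is correct, but note that the paper itself offers no argument at this point: Proposition~\ref{prop:limitdist} is quoted as known and attributed to Theorem~1 of Arratia and Tavar\'e \cite{ArratiaTavare}, so any comparison is with that reference rather than with an in-paper proof. Your route --- the exact joint factorial moment identity $\E\bigl[\prod_k (X^{(n)}_k)_{(a_k)}\bigr]=\prod_k k^{-a_k}$ for $\sum_k k a_k\le n$, matched against the Poisson factorial moments, followed by the method of moments --- is the classical argument (going back to Goncharov/Kolchin), and your counting of configurations of disjoint cycles is right, including the division by the $k$ cyclic rotations per $k$-cycle and the observation that distinct cycles are automatically disjoint. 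Your handling of the delicate last step is also sound: reduce via Cram\'er--Wold to linear combinations $\sum_k t_k X^{(n)}_k$, whose moments are fixed polynomial combinations of the (eventually constant) mixed moments, and apply the Fr\'echet--Shohat theorem, moment-determinacy of $\sum_k t_k X_k$ being guaranteed by its everywhere-finite moment generating function. What the cited approach buys over yours is strength: Arratia and Tavar\'e prove total variation convergence (with rates), not just convergence in distribution, though for the purposes of this paper your weaker conclusion suffices. A small simplification available to you, since the vectors are $\N_0^m$-valued, is to avoid moment-determinacy altogether: the exact factorial moments yield the joint probability mass function by inclusion--exclusion (the Bonferroni series terminates or is easily dominated since the factorial moments are bounded by $1$ uniformly in $n$), and pointwise convergence of the mass function is equivalent to convergence in distribution on the lattice; but this is a matter of taste, not a gap.
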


For $n \in \N$, 
let $P^{(n)}$ 
be the random partition 
having exactly $X^{(n)}_j$ parts of size $j$
for each $j \in \{1,\ldots,k\}$. 
It is clear that $i(\infty ,k)$ is the limit
as $n \rightarrow \infty$ of the probability that $P^{(n)}$ has a subpartition of size $k$.
It follows from Proposition~\ref{prop:limitdist} that $i(\infty,k)$ exists, and 
is equal to the probability that 
the random partition $ (1^{X_1},2^{X_2},\ldots,k^{X_k})$ with exactly $X_j$ parts of size $j$
has a subpartition of size $k$.

\section{The Derangements Table Algorithm}

The input of the Derangements Table Algorithm is a natural number $k$.
We call elements of $\N_0^k$ \emph{rows}, and elements of $\N_0^\ell$ 
for $\ell \le k$ \emph{partial rows}.
Say that a partial row $(m_1,\ldots,m_\ell)$ is \emph{$k$-free} if the partition $(1^{m_1}, \ldots, \ell^{m_\ell})$
has no subpartition of size~$k$. The output of the algorithm is a list in lexicographic order
from greatest to least of all $k$-free rows $(m_1,\ldots, m_k)$ with $m_j \le k/j$ for each~$j$.

\subsection*{Constructing $k$-free rows}

The algorithm's internal state is a partial row~$r$. At the start, $r$ is set  to $(k-1)$.

\begin{itemize}
\item[(A)] [Building a partial row] Suppose $r = (m_1,\ldots,m_\ell)$. 
\begin{itemize}
\item[(1)] If $\ell = k$ then output $r$ [$r$ is a row] and go to (B).

\item[(2)] Else, set $j = \ell+1$.
Take $m$ maximal such that $0 \le m < k/j$ and the partial row $(m_1,\ldots,m_\ell,m)$ is $k$-free.
Set $r$ equal to $(m_1,\ldots,m_\ell,m)$ and repeat (A).
\end{itemize}

\item[(B)] [Begin a new partial row] 
\begin{itemize}
\item[(1)] If $r = (0,\ldots,0) \in \Z^k$ then terminate. 

\item[(2)] Else, we have $r = (m_1,\ldots,m_j,0,\ldots, 0) \in \Z^k$
where $m_j \ge 1$. Set $r$  to $(m_1,\ldots,m_j-1)$ and go to (A).
\end{itemize}
\end{itemize}

The details of the implementation of Step (A2) are key to the speed of the algorithm. We describe the
feature of the greatest mathematical interest here and leave the other refinements to Appendix A.

Say that a partition $\lambda$ of $n$ is \emph{$t$-universal} if $\lambda$ has subpartitions
of all numbers $s \le t$. There is a surprisingly simple characterization of universal partitions.

\begin{proposition}\label{prop:univ}
The partition $(1^{m_1},2^{m_2},\ldots, \ell^{m_\ell})$ is $t$-universal if and only if
\[ \sum_{j=1}^s jm_j \ge s \] 
for all $s \in \{1,\ldots, t\}$.
\end{proposition}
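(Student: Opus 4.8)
The plan is to prove both directions of the equivalence directly, working with the partition $\lambda = (1^{m_1}, 2^{m_2}, \ldots, \ell^{m_\ell})$ and the partial sums $N_s = \sum_{j=1}^s j m_j$, which counts the total size contributed by parts of $\lambda$ of size at most $s$. The necessity direction is the easy one: if $\lambda$ is $t$-universal, then for each $s \le t$ it has a subpartition $\mu$ of size $s$. Every part of $\mu$ has size at most $s$ (a subpartition cannot use a part larger than the number it sums to), so $\mu$ is built entirely from the $m_1 + m_2 + \cdots + m_s$ parts of $\lambda$ that have size at most $s$; hence $s = |\mu| \le N_s$. This gives the stated inequality.

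For sufficiency, assume $N_s \ge s$ for all $s \in \{1, \ldots, t\}$, and I would show by a greedy argument that every $s \le t$ is the size of a subpartition of $\lambda$. Fix $s$ and let $s'$ be the largest part size actually occurring in $\lambda$ with $s' \le s$ (if there is no such part then $m_1 = \cdots = m_s = 0$, forcing $N_s = 0 < s$, a contradiction, so $s'$ exists). The idea is to remove one copy of $s'$ from $\lambda$ and induct: one checks that $s - s'$ still satisfies the hypothesis $N'_{r} \ge r$ for the reduced partition and all $r \le s - s'$, where $N'_r = N_r$ for $r < s'$ and $N'_r = N_r - s'$ for $r \ge s'$. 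The only case needing care is $r$ in the range $s' \le r \le s - s'$, where we need $N_r - s' \ge r$, i.e. $N_r \ge r + s'$; since in this range $2s' \le s$ and one can show $N_r \ge N_{s'} \ge \ldots$ — more cleanly, I would instead argue that $N_r \ge r + s'$ follows because $N_{r}$ already accounts for at least the $r$ units demanded plus, since $r \ge s'$, the copy of $s'$ we removed was counted in $N_r$ and $N_r - s' $ counts a genuine partial sum of the reduced partition, which by a downward check against $N_{s'-1}$ and monotonicity stays $\ge r$. Completing this bookkeeping, induction on $s$ (base case $s \le s'$ handled by noting $\lambda$ itself contains a part of size $s'\ge$ suitable pieces, or simply $s=0$) yields a subpartition of every size up to $t$.

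The main obstacle is making the greedy/inductive step watertight: after subtracting a part of size $s'$, verifying that the partial-sum inequalities survive for the target $s - s'$ requires a short case analysis on the index $r$, and the middle range $s' \le r \le s-s'$ is where an off-by-one or a failure of monotonicity could sneak in. A cleaner alternative I would try first is a non-inductive greedy construction: process part sizes $j = 1, 2, \ldots, \ell$ in increasing order, and at stage $j$ greedily include as many copies of $j$ as possible without exceeding the target $s$; one then argues that the hypothesis $N_s \ge s$ guarantees the running total reaches exactly $s$ by the time we have processed size $s$, because whenever the total is some value $v < s$ strictly below the target after processing size $j-1$, either there is room to add another copy of size $j$ (if one exists and $v + j \le s$) or else $v \equiv s \pmod{\text{small}}$ considerations force $v = s$; the inequality $N_j \ge j$ is exactly what prevents the greedy total from "jumping over" the target. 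I expect this packing-style argument to be the most transparent route, and I would present it as the proof, relegating the inductive version to a remark if needed.
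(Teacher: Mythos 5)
Your first, inductive sketch is on the right track --- it is essentially the paper's own proof: remove one copy of the largest part size $q \le s$ with $m_q \ne 0$ and induct --- but it is incomplete at exactly the point you flag. For $r$ in the range $q \le r \le s-q$ you must show $N_r - q \ge r$, and the tools you invoke (monotonicity of the partial sums and a ``downward check against $N_{q-1}$'') only yield $N_r - q \ge N_q - q \ge 0$, not $\ge r$. The missing ingredient is the \emph{maximality} of $q$: since $m_j = 0$ for $q < j \le s$, the partial sums are constant on that range, so $N_r = N_q = N_s \ge s \ge r + q$, which is precisely how the paper closes its case (i); its case (ii), where $s - q < q$, is immediate because the relevant partial sums are untouched by removing the part of size $q$. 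Without this use of maximality the inductive step does not go through, so as written the argument has a genuine gap.

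More seriously, the ``cleaner alternative'' that you say you would present as the actual proof is false. Take $\lambda = (1^2,3)$ and target $s = 4$: the partial sums are $N_1 = N_2 = 2$, $N_3 = N_4 = 5$, so the hypothesis holds (and indeed $4 = 1+3$ is a subpartition), but your increasing-order greedy takes both parts of size $1$, reaches running total $2$, and then cannot add the part of size $3$ since $2+3 > 4$; it terminates at $2 \ne 4$. The failure mode is not ``jumping over'' the target, which the inequality $N_j \ge j$ is supposed to exclude, but stalling strictly below it because every remaining part is too large --- a possibility your sketch does not address. The greedy that does work processes parts from largest to smallest (take the largest part $\le$ the remaining deficit), and justifying that greedy is exactly what the paper's induction, with the maximality observation above, accomplishes.
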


\begin{proof}
The condition is obviously necessary. Suppose that it holds.
Let $s \le t$ be given. By hypothesis we have $\sum_{j=1}^s jm_j \ge s$.
Let $q$ be greatest such that $q \le s$ and $m_q \not=0$.
Let $m'_j = m_j$ if $j \not= q$ and let $m'_q = m_q-1$.
We consider two cases.
\begin{itemize}
\item[(i)] Suppose $s-q \ge q$. We first show that the partition $(1^{m_1'}, \ldots, q^{m_q'})$
is $(s-q)$-universal. Let $u \le s-q$ be given. If $u < q$ then $\sum_{j=1}^u jm_j' = \sum_{j=1}^u
jm_j \ge u$. If $u \ge q$ then we have
\[ \sum_{j=1}^u jm_j' = \sum_{j=1}^q jm_j' = \sum_{j=1}^q jm_j - q = \sum_{j=1}^s jm_j - q \ge s -q
\ge u. \]
Hence, by induction, 
$(1^{m_1'},\ldots,q^{m_q'})$ is $(s-q)$-universal. In particular it has
a subpartition of size $s-q$. Since $(1^{m_1},\ldots,q^{m_q})$ has an extra part of size~$q$,
it has a subpartition of size $s$.

\item[(ii)] If $s-q < q$ then
$\sum_{j=1}^{u} jm_j' = \sum_{j=1}^{u} jm_j$ for any $u \le s-q$. 
By hypothesis $\sum_{j=1}^{u} jm_j \ge u$, so by
induction the partition
$(1^{m_1'},\ldots,q^{m_q'})$ is $(s-q)$-universal. The proof finishes as in (i).
\hfill$\qedhere$
\end{itemize}
\end{proof}

In Step (A2) of the algorithm we test whether each partial row is $k$-universal using the criterion
in Proposition~\ref{prop:univ}.
Any partial row that passes this test can immediately be discarded.

\subsection*{Computation of $i(\infty,k)$ given the table}

Let $p(\infty,k) = 1-i(\infty ,k)$.
Let $r = (m_1,\ldots,m_k)$ be a row of the table.  For each $j$,
define 
\[ x_j(r) = \begin{cases} \displaystyle \e^{-1/j}\frac{1}{j^{m_j} m_j!} & \text{if $m_j < \lfloor k/j \rfloor$}
\\ \displaystyle 1 -  \e^{-1/j} \sum_{0 \le i < \lfloor k/j\rfloor} 
\frac{1}{j^i i!} & \text{if $m_j = \lfloor k/j \rfloor$.}\end{cases}
\]

\begin{lemma}\label{lemma:ink}
For each $k \in \N$, the limiting probability $p(\infty,k)$ is equal to the sum of $x_1(r)\ldots x_k(r)$ over every
row of the table produced by the Derangements Table Algorithm with input $k$.
\end{lemma}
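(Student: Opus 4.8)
The plan is to show that the $k$-free rows of the table enumerate a partition of the probability space underlying the random partition $(1^{X_1}, 2^{X_2}, \ldots, k^{X_k})$, restricted to the event that this partition has no subpartition of size $k$. By Proposition~\ref{prop:limitdist} and the discussion following it, $p(\infty,k)$ is exactly the probability that $(1^{X_1},\ldots,k^{X_k})$ has no subpartition of size $k$, where the $X_j$ are independent Poisson variables with $\E X_j = 1/j$. I would first observe that whether this partition has a subpartition of size $k$ depends only on the \emph{truncated} vector $(\min(X_1, \lfloor k/1\rfloor), \ldots, \min(X_k, \lfloor k/k \rfloor))$: a part of size $j$ can be used at most $\lfloor k/j \rfloor$ times in any subpartition summing to $k$, so any copies beyond that threshold are irrelevant. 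Thus the event $\{(1^{X_1},\ldots,k^{X_k}) \text{ has no subpartition of size } k\}$ is the disjoint union, over all $k$-free rows $r = (m_1,\ldots,m_k)$ with $m_j \le \lfloor k/j\rfloor$, of the event $E_r = \bigcap_{j=1}^k \{ \min(X_j, \lfloor k/j\rfloor) = m_j \}$.

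Next I would compute $\P(E_r)$. By independence of the $X_j$, it factors as $\prod_{j=1}^k \P(\min(X_j, \lfloor k/j\rfloor) = m_j)$. For $m_j < \lfloor k/j\rfloor$ this is simply $\P(X_j = m_j) = \e^{-1/j} \frac{1}{j^{m_j} m_j!}$, the Poisson point mass, which is precisely $x_j(r)$ in the first case of the definition. For $m_j = \lfloor k/j\rfloor$ it is $\P(X_j \ge \lfloor k/j\rfloor) = 1 - \P(X_j < \lfloor k/j\rfloor) = 1 - \e^{-1/j}\sum_{0 \le i < \lfloor k/j\rfloor} \frac{1}{j^i i!}$, which matches the second case of $x_j(r)$. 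Hence $\P(E_r) = x_1(r)\cdots x_k(r)$, and summing over all $k$-free rows $r$ gives $p(\infty,k)$.

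The remaining point is to verify that the Derangements Table Algorithm outputs exactly the set of $k$-free rows $(m_1,\ldots,m_k)$ with $m_j \le \lfloor k/j\rfloor$ for each $j$ (equivalently $m_j < k/j$, since $m_j$ is an integer), each exactly once. Here I would argue that step (A) performs a depth-first traversal of the tree of $k$-free partial rows: at each node $(m_1,\ldots,m_\ell)$ with $\ell < k$, step (A2) extends by the largest admissible value of $m_{\ell+1}$, and the backtracking in step (B2) — decrementing the last nonzero coordinate and truncating — moves to the next sibling subtree, exhausting all children in decreasing order. Two facts make this work: first, if a partial row is not $k$-free then no extension of it is $k$-free, so pruning is valid and the full-length $k$-free rows are exactly the leaves at depth $k$; second, visiting children in decreasing order of the last coordinate yields the leaves in lexicographic order from greatest to least, and termination in (B1) occurs precisely when the traversal returns to the all-zero root. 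Since distinct leaves of this tree are distinct rows and every $k$-free row of length $k$ with $m_j < k/j$ is such a leaf, each is output exactly once.

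I expect the main obstacle to be the bookkeeping in the last paragraph: making the correspondence between the imperative description of steps (A) and (B) and the abstract depth-first traversal precise, in particular checking that the truncation in (B2) correctly identifies the parent-and-next-sibling and that the termination condition fires at the right moment. The probabilistic content (first two paragraphs) is essentially a routine unwinding of independence and the Poisson mass function once the truncation observation is in place.
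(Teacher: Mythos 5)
Your proof is correct and follows essentially the same route as the paper: both arguments rest on the observation that only $\min(X_j,\lfloor k/j\rfloor)$ parts of size $j$ matter for the existence of a subpartition of size $k$, so that each row $r$ accounts for exactly the partitions with the stated ``exactly $m_i$'' / ``at least $\lfloor k/j\rfloor$'' multiplicities, whose probability factors by independence into $x_1(r)\cdots x_k(r)$. Your third paragraph, checking that the algorithm's depth-first search with backtracking outputs each qualifying $k$-free row exactly once, is a correct (and harmless) addition that the paper treats as part of the algorithm's specification rather than of the lemma.
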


\begin{proof}
Let $r = (m_1,\ldots,m_k)$ be a row of the table.
Let $J = \{j \in \{1,\ldots,k\} : m_j = \lfloor k/j \rfloor\}$. Note that
any partition $(1^{m_1'},\ldots, k^{m_k'})$ such that $m'_i = m_i$ if $i \not\in J$
is $k$-free. Each partition of this form with $m'_j < m_j$ for some $j \in J$ corresponds
to a row appearing later in the table.
Thus $r$ must account precisely for the partitions with \emph{at least} $\lfloor k/j\rfloor$
parts of size $j$ for every $j \in J$, and with \emph{exactly} $m_i$ parts of size $i$ for every $i \not\in J$.
The correct contribution
from $r$ to $p(\infty,k)$ is therefore $x_1(r)\ldots, x_k(r)$.
\end{proof}

\subsection*{Example}

The Derangements Table Algorithm can readily be implemented by hand for small values of~$k$. 
As an illustration, the table for $k=4$ is shown below.

\begin{center}
\begin{tabular}{ccccll} \\ \toprule 
1 & 2 & 3 & 4 & \multicolumn{2}{l}{probability} \\ \midrule
3 & 0 & 0 & 0 & $ \mfrac{1}{6}\e^{-25/12}$ & 0.020752\\
2 & 0 & 0 & 0 & $ \mfrac{1}{2}\e^{-25/12}$ & 0.062257 \\ 
1 & 1 & 0 & 0 & $ \mfrac{1}{2}\e^{-25/12}$ & 0.062257 \\ 
1 & 0 & 0 & 0 & $ \e^{-25/12}$ & 0.124514 \\
0 & 1 & 1 & 0 & $\mfrac{1}{2} \e^{-7/4} \bigl( 1 - \e^{-1/3} \bigr)$ & 0.024630 \\
0 & 1 & 0 & 0 & $\mfrac{1}{2}\e^{-25/12} $ & 0.062257 \\
0 & 0 & 1 & 0 & $\e^{-7/4} \bigl( 1 - \e^{-1/3} \bigr) $ & 0.049259 \\ 
0 & 0 & 0 & 0 & $\e^{-25/12}$ & 0.124514 \\ \bottomrule
\end{tabular}
\end{center}


\medskip
\noindent For example,
the exact probability for 
the row $(0,1,1,0)$, accounting for all partitions of the form $(2,3^a)$ with $a \ge 1$, 
is $\e^{-1} \e^{-1/2} \mfrac{1}{2} \bigl( 1 - \e^{-1/3} \bigr) \e^{-1/4}$. 
The only other row having a multiplicity $m_j$ such that $m_j = \lfloor k /j \rfloor$ is
$(0,0,1,0)$. Thus all remaining rows
contribute a rational multiple of $\e^{-1-1/2-1/3-1/4} = \e^{-25/12}$ to the
limiting probability. One finds that
\[ p(\infty,4) = \mfrac{3}{2} (1-\e^{-1/3})\,\e^{-7/4}+\mfrac{11}{3} \e^{-25/12}
\approx 0.530442. \]

\section*{Appendix A: Haskell implementation} 

The Derangements Table Algorithm has been implemented in Haskell \cite{Haskell98}. The arXiv submission
of this paper includes the relevant files: \texttt{DerangementsTable.hs} and \texttt{Main.hs}.

We note two refinements to the basic version of the algorithm presented above.

\begin{itemize}
\item[(1)] In any row the $k$-th element, corresponding to cycles of length $k$, is always zero.
It therefore suffices to work with rows and partial rows of length at most $k-1$, scaling
by $\exp(-1/k)$ to account for the $k$-cycles.\\[-6pt]
 
\item[(2)] Suppose that $r$ is a partial row of length $\ell$ 
and $d \in \N$ is such that (i) $n$ is not divisible by $d$, and (ii)
for each $j \in \{1,\ldots, \ell\}$, either $j$ is divisible by $d$, or 
$a_j = 0$. Then $r$ is clearly $k$-free. Applying this trick to the partial
rows considered in Step A2 when finding $m$
gives a surprisingly large speed-up. For example, when $k=25$,
it reduces the running time from approximately $44$ minutes to approximately $12$ minutes.
\end{itemize}

\smallskip
If a partial row is neither $k$-universal, nor meets the condition for the divisibility check,
then an exhaustive search is made for subpartitions of size $k$ using the function
\hf{subpartitionSizes}. This accounts for the majority of the running time.
For example, when $k=25$, 5\hskip1pt240\hskip1pt351 partial rows are considered;
of these 103\hskip1pt189 are $k$-universal and $2\hskip1pt041\hskip1pt735$ are ruled out by divisibility, leaving 
$3\hskip1pt095\hskip1pt427$
on which a full test must be made. The final table has $2\,235\,240$ rows.

The values shown in Appendix~B take about 72 hours to compute on one core of a 2.7GHz Intel i5.

The key functions in \texttt{DerangementsTable.hs} are reproduced below.

\renewcommand{\c}[1]{{\color{blue}#1}}
\bigskip
\begin{minipage}{5in}\small
\begin{Verbatim}[commandchars=\\\{\}]
universality ms = \c{case} as' \c{of} [] -> 0
                              otherwise -> s
    \c{where} ss = zip [1..] $ partialSums [a*m | (a,m) <- zip [1..] ms]
          (as', _)   = span (uncurry (<=)) ss 
          (a, s) : _ = reverse as'

partialSums = scanl1 (+)  

subpartition k ms 
    | universality ms >= k    = True
    | divisibilityTest k ms   = False
    | otherwise = k `elem` (subpartitionSizes $ zip [1..] ms)

divisibilityTest k ms = 
    \c{let} test c = k `mod` c /= 0
                 && and [j `mod` c == 0 || m == 0 | (j,m) <- zip [1..] ms]
    \c{in}  or [test c | c <- [2..k `div` 2]]

subpartitionSizes [] = [0]
subpartitionSizes ((a, m) : ams) 
    = [l*a + y | l <- [0..m], y <- subpartitionSizes ams]
\end{Verbatim}
\end{minipage}

\bigskip
Clearly the \texttt{subpartition} function can also be used to compute the finite
derangement probabilities $i(n,k)$ and $p(n,k) = 1-i(n,k)$. 
The values shown in Appendices B and C take about 24 hours to compute on one core of
a 2.7 GHz Intel~i5.
The relevant code is reproduced below.

\bigskip
\begin{minipage}{5in}\small                
\begin{Verbatim}[commandchars=\\\{\}]
fixesSet k ms | universality ms >= k = True
              | otherwise = subpartition k ms

centralizerSize ms = product [f (x, m) | (x,m) <- zip [1..] ms]
    \c{where} f (x, m) = \c{let} x' = toInteger x
                         m' = toInteger m
                     \c{in} x'^m' * factorial m'

finiteTable n k = [(ms, subpartition k ms, centralizerSize ms)
                                   | xs <- partitions n, 
                                     \c{let} ms = toMultiplicities n xs]

sumStrict = foldl1' (+)

i n k = sumStrict [1 \% c | (_, True, c) <- finiteTable n k]
\end{Verbatim}
\end{minipage}

\newpage
\section*{Appendix B: $i(\infty, k)$ for $k \le 30$}

In the table below probabilities are rounded to $8$ decimal places. 
These probabilities were computed using {\sc Mathematica} to add up the contributions
given by Lemma~\ref{lemma:ink}
from each row of the table produced by the Derangements Table Algorithm, requiring an exact answer
to 20 decimal places. The number of rows of the table is also given.

\bigskip
\begin{center}
\begin{tabular}{rclrcl} \toprule
$k$ & $i(\infty,k)$  & $\mathrm{rows}(k)$ & $k$ & $i(\infty,k)$  & $\mathrm{rows}(k)$    \\ \midrule
1 & 0.63212056 & 1 & 16 & 0.33807249 & 8420 \\[-1.00pt]
2 & 0.55373968 & 2 & 17 & 0.33297333 & 19553 \\[-1.00pt]
3 & 0.49658324 & 4 & 18 & 0.32907588 & 23586 \\[-1.00pt]
4 & 0.46955773 & 8 & 19 & 0.32472908 & 61470 \\[-1.00pt]
5 & 0.44145770 & 15 & 20 & 0.32132422 & 71413 \\[-1.00pt]
6 & 0.42505870 & 29 & 21 & 0.31750065 & 193303 \\[-1.00pt]
7 & 0.40848113 & 53 & 22 & 0.31449862 & 216928 \\[-1.00pt]
8 & 0.39727771 & 93 & 23 & 0.31110428 & 508502 \\[-1.00pt]
9 & 0.38516443 & 187 & 24 & 0.30842280 & 532542 \\[-1.00pt]
10 & 0.37687192 & 305 &25 & 0.30538904 & 2235240 \\[-1.00pt]
11 & 0.36773064 & 561 & 26 & 0.30295361 & 1817364 \\[-1.00pt]
12 & 0.36119415 & 916 & 27 & 0.30021508 & 5143197 \\[-1.00pt]
13 & 0.35396068 & 2067 & 28 & 0.29801340 & 4961040 \\[-1.00pt]
14 & 0.34855007 & 2782 & 29 & 0.29550915 & 17517544 \\[-1.00pt]
15 & 0.34256331 & 5670 & 30 & 0.29348611 & 12022223 \\
\bottomrule
\end{tabular}
\end{center}

\bigskip
Question 1 in \cite{EberhardFordGreen} asks if there is a constant 
$C$ such that $i(\infty,k) \sim Ck^{-\delta}(\log k)^{-3/2}$. The ratio
of the two sides is shown below for $1 \le k \le 30$.
If Question 1
has an affirmative answer then the ratio converges to $C$ as $k \rightarrow \infty$.

\enlargethispage{36pt}
\begin{figure}[h]
\begin{center}
\hspace*{-0.15in}\scalebox{1.10}{\includegraphics{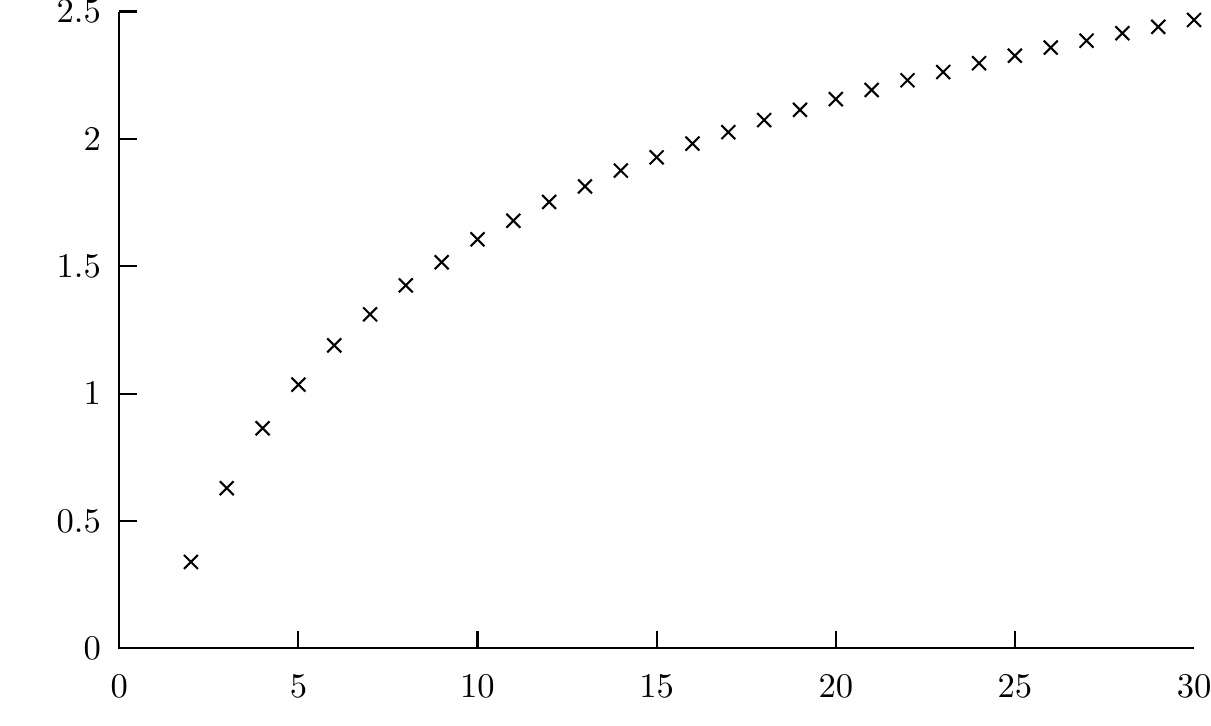}}
\caption{$i(\infty,k)/k^{-\delta}(\log k)^{-3/2}$ for $2 \le k \le 30$}
\end{center}
\end{figure}


\newgeometry{margin=6pt}
\begin{landscape}
\section*{Appendix C: $i(n,k)$ for $n \le 70$ and $i(\infty,k)$
(values are rounded to 5 decimal places)}

\begin{center}
\scalebox{0.52}{
\renewcommand{\sp}{.}
\begin{tabular}{llllllllllllllllllllllllllllllllllll}\toprule 
$n \backslash k$ & 1 & 2 & 3 & 4 & 5 & 6 & 7 & 8 & 9 & 10 & 11 & 12 & 13 & 14 & 15 & 16 & 17 & 18 & 19 & 20 & 21 & 22 & 23 & 24 & 25 & 26 & 27 & 28 & 29 & 30 & 31 & 32 & 33 & 34 & 35 \\ \midrule
2 & .50000 & \sp & \sp & \sp & \sp & \sp & \sp & \sp & \sp & \sp & \sp & \sp & \sp & \sp & \sp & \sp & \sp & \sp & \sp & \sp & \sp & \sp & \sp & \sp & \sp & \sp & \sp & \sp & \sp & \sp & \sp & \sp & \sp & \sp & \sp \\
3 & .66667 & \sp & \sp & \sp & \sp & \sp & \sp & \sp & \sp & \sp & \sp & \sp & \sp & \sp & \sp & \sp & \sp & \sp & \sp & \sp & \sp & \sp & \sp & \sp & \sp & \sp & \sp & \sp & \sp & \sp & \sp & \sp & \sp & \sp & \sp \\
4 & .62500 & .41667 & \sp & \sp & \sp & \sp & \sp & \sp & \sp & \sp & \sp & \sp & \sp & \sp & \sp & \sp & \sp & \sp & \sp & \sp & \sp & \sp & \sp & \sp & \sp & \sp & \sp & \sp & \sp & \sp & \sp & \sp & \sp & \sp & \sp \\
5 & .63333 & .55000 & \sp & \sp & \sp & \sp & \sp & \sp & \sp & \sp & \sp & \sp & \sp & \sp & \sp & \sp & \sp & \sp & \sp & \sp & \sp & \sp & \sp & \sp & \sp & \sp & \sp & \sp & \sp & \sp & \sp & \sp & \sp & \sp & \sp \\
6 & .63194 & .57778 & .36250 & \sp & \sp & \sp & \sp & \sp & \sp & \sp & \sp & \sp & \sp & \sp & \sp & \sp & \sp & \sp & \sp & \sp & \sp & \sp & \sp & \sp & \sp & \sp & \sp & \sp & \sp & \sp & \sp & \sp & \sp & \sp & \sp \\
7 & .63214 & .55159 & .49048 & \sp & \sp & \sp & \sp & \sp & \sp & \sp & \sp & \sp & \sp & \sp & \sp & \sp & \sp & \sp & \sp & \sp & \sp & \sp & \sp & \sp & \sp & \sp & \sp & \sp & \sp & \sp & \sp & \sp & \sp & \sp & \sp \\
8 & .63212 & .55089 & .50037 & .33770 & \sp & \sp & \sp & \sp & \sp & \sp & \sp & \sp & \sp & \sp & \sp & \sp & \sp & \sp & \sp & \sp & \sp & \sp & \sp & \sp & \sp & \sp & \sp & \sp & \sp & \sp & \sp & \sp & \sp & \sp & \sp \\
9 & .63212 & .55424 & .51478 & .44819 & \sp & \sp & \sp & \sp & \sp & \sp & \sp & \sp & \sp & \sp & \sp & \sp & \sp & \sp & \sp & \sp & \sp & \sp & \sp & \sp & \sp & \sp & \sp & \sp & \sp & \sp & \sp & \sp & \sp & \sp & \sp \\
10 & .63212 & .55399 & .49467 & .46770 & .31321 & \sp & \sp & \sp & \sp & \sp & \sp & \sp & \sp & \sp & \sp & \sp & \sp & \sp & \sp & \sp & \sp & \sp & \sp & \sp & \sp & \sp & \sp & \sp & \sp & \sp & \sp & \sp & \sp & \sp & \sp \\
11 & .63212 & .55367 & .49524 & .47767 & .41971 & \sp & \sp & \sp & \sp & \sp & \sp & \sp & \sp & \sp & \sp & \sp & \sp & \sp & \sp & \sp & \sp & \sp & \sp & \sp & \sp & \sp & \sp & \sp & \sp & \sp & \sp & \sp & \sp & \sp & \sp \\
12 & .63212 & .55372 & .49505 & .48527 & .43450 & .29877 & \sp & \sp & \sp & \sp & \sp & \sp & \sp & \sp & \sp & \sp & \sp & \sp & \sp & \sp & \sp & \sp & \sp & \sp & \sp & \sp & \sp & \sp & \sp & \sp & \sp & \sp & \sp & \sp & \sp \\
13 & .63212 & .55375 & .49700 & .46991 & .44605 & .39720 & \sp & \sp & \sp & \sp & \sp & \sp & \sp & \sp & \sp & \sp & \sp & \sp & \sp & \sp & \sp & \sp & \sp & \sp & \sp & \sp & \sp & \sp & \sp & \sp & \sp & \sp & \sp & \sp & \sp \\
14 & .63212 & .55374 & .49673 & .46796 & .44905 & .41645 & .28503 & \sp & \sp & \sp & \sp & \sp & \sp & \sp & \sp & \sp & \sp & \sp & \sp & \sp & \sp & \sp & \sp & \sp & \sp & \sp & \sp & \sp & \sp & \sp & \sp & \sp & \sp & \sp & \sp \\
15 & .63212 & .55374 & .49667 & .46821 & .45466 & .42313 & .37927 & \sp & \sp & \sp & \sp & \sp & \sp & \sp & \sp & \sp & \sp & \sp & \sp & \sp & \sp & \sp & \sp & \sp & \sp & \sp & \sp & \sp & \sp & \sp & \sp & \sp & \sp & \sp & \sp \\
16 & .63212 & .55374 & .49654 & .46849 & .44150 & .43054 & .39499 & .27552 & \sp & \sp & \sp & \sp & \sp & \sp & \sp & \sp & \sp & \sp & \sp & \sp & \sp & \sp & \sp & \sp & \sp & \sp & \sp & \sp & \sp & \sp & \sp & \sp & \sp & \sp & \sp \\
17 & .63212 & .55374 & .49657 & .46976 & .44075 & .43301 & .40626 & .36430 & \sp & \sp & \sp & \sp & \sp & \sp & \sp & \sp & \sp & \sp & \sp & \sp & \sp & \sp & \sp & \sp & \sp & \sp & \sp & \sp & \sp & \sp & \sp & \sp & \sp & \sp & \sp \\
18 & .63212 & .55374 & .49658 & .46979 & .44001 & .43691 & .40899 & .38199 & .26568 & \sp & \sp & \sp & \sp & \sp & \sp & \sp & \sp & \sp & \sp & \sp & \sp & \sp & \sp & \sp & \sp & \sp & \sp & \sp & \sp & \sp & \sp & \sp & \sp & \sp & \sp \\
19 & .63212 & .55374 & .49659 & .46966 & .44056 & .42594 & .41518 & .39097 & .35199 & \sp & \sp & \sp & \sp & \sp & \sp & \sp & \sp & \sp & \sp & \sp & \sp & \sp & \sp & \sp & \sp & \sp & \sp & \sp & \sp & \sp & \sp & \sp & \sp & \sp & \sp \\
20 & .63212 & .55374 & .49658 & .46960 & .44073 & .42425 & .41586 & .39653 & .36734 & .25881 & \sp & \sp & \sp & \sp & \sp & \sp & \sp & \sp & \sp & \sp & \sp & \sp & \sp & \sp & \sp & \sp & \sp & \sp & \sp & \sp & \sp & \sp & \sp & \sp & \sp \\
21 & .63212 & .55374 & .49658 & .46953 & .44167 & .42416 & .41929 & .40026 & .37846 & .34102 & \sp & \sp & \sp & \sp & \sp & \sp & \sp & \sp & \sp & \sp & \sp & \sp & \sp & \sp & \sp & \sp & \sp & \sp & \sp & \sp & \sp & \sp & \sp & \sp & \sp \\
22 & .63212 & .55374 & .49658 & .46954 & .44160 & .42391 & .40938 & .40391 & .38191 & .35779 & .25153 & \sp & \sp & \sp & \sp & \sp & \sp & \sp & \sp & \sp & \sp & \sp & \sp & \sp & \sp & \sp & \sp & \sp & \sp & \sp & \sp & \sp & \sp & \sp & \sp \\
23 & .63212 & .55374 & .49658 & .46955 & .44159 & .42429 & .40825 & .40472 & .38708 & .36731 & .33177 & \sp & \sp & \sp & \sp & \sp & \sp & \sp & \sp & \sp & \sp & \sp & \sp & \sp & \sp & \sp & \sp & \sp & \sp & \sp & \sp & \sp & \sp & \sp & \sp \\
24 & .63212 & .55374 & .49658 & .46956 & .44150 & .42450 & .40749 & .40726 & .38869 & .37262 & .34677 & .24618 & \sp & \sp & \sp & \sp & \sp & \sp & \sp & \sp & \sp & \sp & \sp & \sp & \sp & \sp & \sp & \sp & \sp & \sp & \sp & \sp & \sp & \sp & \sp \\
25 & .63212 & .55374 & .49658 & .46956 & .44147 & .42519 & .40766 & .39862 & .39190 & .37580 & .35747 & .32342 & \sp & \sp & \sp & \sp & \sp & \sp & \sp & \sp & \sp & \sp & \sp & \sp & \sp & \sp & \sp & \sp & \sp & \sp & \sp & \sp & \sp & \sp & \sp \\
26 & .63212 & .55374 & .49658 & .46956 & .44143 & .42523 & .40747 & .39712 & .39199 & .37970 & .36171 & .33935 & .24052 & \sp & \sp & \sp & \sp & \sp & \sp & \sp & \sp & \sp & \sp & \sp & \sp & \sp & \sp & \sp & \sp & \sp & \sp & \sp & \sp & \sp & \sp \\
27 & .63212 & .55374 & .49658 & .46956 & .44145 & .42517 & .40788 & .39663 & .39434 & .38117 & .36636 & .34861 & .31602 & \sp & \sp & \sp & \sp & \sp & \sp & \sp & \sp & \sp & \sp & \sp & \sp & \sp & \sp & \sp & \sp & \sp & \sp & \sp & \sp & \sp & \sp \\
28 & .63212 & .55374 & .49658 & .46956 & .44145 & .42514 & .40802 & .39633 & .38644 & .38346 & .36803 & .35458 & .33061 & .23617 & \sp & \sp & \sp & \sp & \sp & \sp & \sp & \sp & \sp & \sp & \sp & \sp & \sp & \sp & \sp & \sp & \sp & \sp & \sp & \sp & \sp \\
29 & .63212 & .55374 & .49658 & .46956 & .44146 & .42509 & .40861 & .39646 & .38530 & .38365 & .37149 & .35780 & .34092 & .30931 & \sp & \sp & \sp & \sp & \sp & \sp & \sp & \sp & \sp & \sp & \sp & \sp & \sp & \sp & \sp & \sp & \sp & \sp & \sp & \sp & \sp \\
30 & .63212 & .55374 & .49658 & .46956 & .44146 & .42507 & .40861 & .39644 & .38452 & .38548 & .37210 & .36052 & .34544 & .32452 & .23153 & \sp & \sp & \sp & \sp & \sp & \sp & \sp & \sp & \sp & \sp & \sp & \sp & \sp & \sp & \sp & \sp & \sp & \sp & \sp & \sp \\
31 & .63212 & .55374 & .49658 & .46956 & .44146 & .42504 & .40861 & .39675 & .38448 & .37845 & .37422 & .36279 & .35022 & .33370 & .30330 & \sp & \sp & \sp & \sp & \sp & \sp & \sp & \sp & \sp & \sp & \sp & \sp & \sp & \sp & \sp & \sp & \sp & \sp & \sp & \sp \\
32 & .63212 & .55374 & .49658 & .46956 & .44146 & .42505 & .40856 & .39691 & .38426 & .37702 & .37409 & .36522 & .35202 & .33936 & .31741 & .22795 & \sp & \sp & \sp & \sp & \sp & \sp & \sp & \sp & \sp & \sp & \sp & \sp & \sp & \sp & \sp & \sp & \sp & \sp & \sp \\
33 & .63212 & .55374 & .49658 & .46956 & .44146 & .42505 & .40854 & .39737 & .38447 & .37650 & .37584 & .36592 & .35482 & .34318 & .32725 & .29770 & \sp & \sp & \sp & \sp & \sp & \sp & \sp & \sp & \sp & \sp & \sp & \sp & \sp & \sp & \sp & \sp & \sp & \sp & \sp \\
34 & .63212 & .55374 & .49658 & .46956 & .44146 & .42506 & .40850 & .39741 & .38445 & .37618 & .36917 & .36749 & .35613 & .34599 & .33202 & .31231 & .22404 & \sp & \sp & \sp & \sp & \sp & \sp & \sp & \sp & \sp & \sp & \sp & \sp & \sp & \sp & \sp & \sp & \sp & \sp \\
35 & .63212 & .55374 & .49658 & .46956 & .44146 & .42506 & .40848 & .39739 & .38475 & .37614 & .36813 & .36745 & .35837 & .34781 & .33690 & .32139 & .29267 & \sp & \sp & \sp & \sp & \sp & \sp & \sp & \sp & \sp & \sp & \sp & \sp & \sp & \sp & \sp & \sp & \sp & \sp \\
36 & .63212 & .55374 & .49658 & .46956 & .44146 & .42506 & .40846 & .39737 & .38485 & .37606 & .36736 & .36890 & .35861 & .35009 & .33879 & .32688 & .30634 & .22096 & \sp & \sp & \sp & \sp & \sp & \sp & \sp & \sp & \sp & \sp & \sp & \sp & \sp & \sp & \sp & \sp & \sp \\
37 & .63212 & .55374 & .49658 & .46956 & .44146 & .42506 & .40847 & .39733 & .38525 & .37623 & .36723 & .36286 & .36016 & .35137 & .34156 & .33091 & .31594 & .28794 & \sp & \sp & \sp & \sp & \sp & \sp & \sp & \sp & \sp & \sp & \sp & \sp & \sp & \sp & \sp & \sp & \sp \\
38 & .63212 & .55374 & .49658 & .46956 & .44146 & .42506 & .40847 & .39732 & .38527 & .37626 & .36699 & .36159 & .35988 & .35309 & .34267 & .33383 & .32067 & .30201 & .21765 & \sp & \sp & \sp & \sp & \sp & \sp & \sp & \sp & \sp & \sp & \sp & \sp & \sp & \sp & \sp & \sp \\
39 & .63212 & .55374 & .49658 & .46956 & .44146 & .42506 & .40848 & .39729 & .38528 & .37649 & .36706 & .36105 & .36130 & .35337 & .34491 & .33570 & .32550 & .31089 & .28361 & \sp & \sp & \sp & \sp & \sp & \sp & \sp & \sp & \sp & \sp & \sp & \sp & \sp & \sp & \sp & \sp \\
40 & .63212 & .55374 & .49658 & .46956 & .44146 & .42506 & .40848 & .39728 & .38525 & .37661 & .36698 & .36068 & .35558 & .35453 & .34560 & .33762 & .32767 & .31625 & .29691 & .21498 & \sp & \sp & \sp & \sp & \sp & \sp & \sp & \sp & \sp & \sp & \sp & \sp & \sp & \sp & \sp \\
41 & .63212 & .55374 & .49658 & .46956 & .44146 & .42506 & .40848 & .39726 & .38524 & .37694 & .36718 & .36060 & .35454 & .35439 & .34722 & .33912 & .33041 & .32038 & .30627 & .27954 & \sp & \sp & \sp & \sp & \sp & \sp & \sp & \sp & \sp & \sp & \sp & \sp & \sp & \sp & \sp \\
42 & .63212 & .55374 & .49658 & .46956 & .44146 & .42506 & .40848 & .39727 & .38520 & .37698 & .36719 & .36050 & .35381 & .35558 & .34726 & .34072 & .33155 & .32341 & .31093 & .29312 & .21209 & \sp & \sp & \sp & \sp & \sp & \sp & \sp & \sp & \sp & \sp & \sp & \sp & \sp & \sp \\
43 & .63212 & .55374 & .49658 & .46956 & .44146 & .42506 & .40848 & .39727 & .38519 & .37697 & .36741 & .36054 & .35363 & .35029 & .34843 & .34150 & .33354 & .32539 & .31569 & .30186 & .27578 & \sp & \sp & \sp & \sp & \sp & \sp & \sp & \sp & \sp & \sp & \sp & \sp & \sp & \sp \\
44 & .63212 & .55374 & .49658 & .46956 & .44146 & .42506 & .40848 & .39727 & .38517 & .37696 & .36750 & .36052 & .35336 & .34915 & .34812 & .34274 & .33450 & .32723 & .31810 & .30711 & .28871 & .20976 & \sp & \sp & \sp & \sp & \sp & \sp & \sp & \sp & \sp & \sp & \sp & \sp & \sp \\
45 & .63212 & .55374 & .49658 & .46956 & .44146 & .42506 & .40848 & .39728 & .38516 & .37694 & .36780 & .36068 & .35338 & .34859 & .34929 & .34285 & .33604 & .32845 & .32079 & .31122 & .29780 & .27220 & \sp & \sp & \sp & \sp & \sp & \sp & \sp & \sp & \sp & \sp & \sp & \sp & \sp \\
46 & .63212 & .55374 & .49658 & .46956 & .44146 & .42506 & .40848 & .39728 & .38515 & .37693 & .36782 & .36072 & .35327 & .34820 & .34425 & .34378 & .33645 & .33016 & .32203 & .31426 & .30248 & .28539 & .20720 & \sp & \sp & \sp & \sp & \sp & \sp & \sp & \sp & \sp & \sp & \sp & \sp \\
47 & .63212 & .55374 & .49658 & .46956 & .44146 & .42506 & .40848 & .39728 & .38515 & .37690 & .36783 & .36090 & .35336 & .34808 & .34329 & .34355 & .33770 & .33111 & .32394 & .31644 & .30712 & .29397 & .26890 & \sp & \sp & \sp & \sp & \sp & \sp & \sp & \sp & \sp & \sp & \sp & \sp \\
48 & .63212 & .55374 & .49658 & .46956 & .44146 & .42506 & .40848 & .39728 & .38516 & .37689 & .36781 & .36099 & .35335 & .34795 & .34261 & .34456 & .33762 & .33229 & .32474 & .31831 & .30958 & .29907 & .28149 & .20513 & \sp & \sp & \sp & \sp & \sp & \sp & \sp & \sp & \sp & \sp & \sp \\
49 & .63212 & .55374 & .49658 & .46956 & .44146 & .42506 & .40848 & .39728 & .38516 & .37688 & .36781 & .36125 & .35350 & .34795 & .34236 & .33987 & .33854 & .33277 & .32642 & .31956 & .31243 & .30320 & .29035 & .26573 & \sp & \sp & \sp & \sp & \sp & \sp & \sp & \sp & \sp & \sp & \sp \\
50 & .63212 & .55374 & .49658 & .46956 & .44146 & .42506 & .40848 & .39728 & .38516 & .37687 & .36778 & .36129 & .35353 & .34791 & .34208 & .33881 & .33822 & .33375 & .32700 & .32093 & .31371 & .30620 & .29500 & .27855 & .20286 & \sp & \sp & \sp & \sp & \sp & \sp & \sp & \sp & \sp & \sp \\
51 & .63212 & .55374 & .49658 & .46956 & .44146 & .42506 & .40848 & .39728 & .38516 & .37686 & .36777 & .36128 & .35371 & .34798 & .34207 & .33825 & .33922 & .33375 & .32818 & .32202 & .31556 & .30850 & .29954 & .28693 & .26278 & \sp & \sp & \sp & \sp & \sp & \sp & \sp & \sp & \sp & \sp \\
52 & .63212 & .55374 & .49658 & .46956 & .44146 & .42506 & .40848 & .39728 & .38516 & .37686 & .36775 & .36128 & .35378 & .34799 & .34195 & .33789 & .33471 & .33449 & .32844 & .32330 & .31641 & .31039 & .30205 & .29199 & .27508 & .20099 & \sp & \sp & \sp & \sp & \sp & \sp & \sp & \sp & \sp \\
53 & .63212 & .55374 & .49658 & .46956 & .44146 & .42506 & .40848 & .39728 & .38517 & .37687 & .36775 & .36127 & .35401 & .34813 & .34199 & .33771 & .33381 & .33424 & .32941 & .32395 & .31785 & .31176 & .30488 & .29606 & .28374 & .25995 & \sp & \sp & \sp & \sp & \sp & \sp & \sp & \sp & \sp \\
54 & .63212 & .55374 & .49658 & .46956 & .44146 & .42506 & .40848 & .39728 & .38517 & .37687 & .36773 & .36126 & .35404 & .34817 & .34195 & .33756 & .33315 & .33511 & .32927 & .32485 & .31857 & .31308 & .30632 & .29900 & .28833 & .27242 & .19895 & \sp & \sp & \sp & \sp & \sp & \sp & \sp & \sp \\
55 & .63212 & .55374 & .49658 & .46956 & .44146 & .42506 & .40848 & .39728 & .38517 & .37687 & .36773 & .36124 & .35405 & .34831 & .34204 & .33754 & .33289 & .33089 & .33005 & .32515 & .31984 & .31398 & .30815 & .30139 & .29278 & .28066 & .25730 & \sp & \sp & \sp & \sp & \sp & \sp & \sp & \sp \\
56 & .63212 & .55374 & .49658 & .46956 & .44146 & .42506 & .40848 & .39728 & .38516 & .37687 & .36772 & .36123 & .35404 & .34839 & .34205 & .33747 & .33262 & .32991 & .32970 & .32595 & .32023 & .31525 & .30906 & .30339 & .29533 & .28561 & .26932 & .19727 & \sp & \sp & \sp & \sp & \sp & \sp & \sp \\
57 & .63212 & .55374 & .49658 & .46956 & .44146 & .42506 & .40848 & .39728 & .38516 & .37687 & .36772 & .36121 & .35404 & .34859 & .34218 & .33751 & .33256 & .32937 & .33058 & .32588 & .32116 & .31601 & .31044 & .30474 & .29811 & .28966 & .27776 & .25474 & \sp & \sp & \sp & \sp & \sp & \sp & \sp \\
58 & .63212 & .55374 & .49658 & .46956 & .44146 & .42506 & .40848 & .39728 & .38516 & .37687 & .36772 & .36120 & .35402 & .34863 & .34221 & .33750 & .33242 & .32901 & .32650 & .32650 & .32130 & .31702 & .31106 & .30609 & .29967 & .29256 & .28234 & .26691 & .19541 & \sp & \sp & \sp & \sp & \sp & \sp \\
59 & .63212 & .55374 & .49658 & .46956 & .44146 & .42506 & .40848 & .39728 & .38516 & .37687 & .36773 & .36119 & .35401 & .34863 & .34236 & .33758 & .33245 & .32882 & .32564 & .32623 & .32210 & .31745 & .31236 & .30703 & .30155 & .29497 & .28671 & .27500 & .25233 & \sp & \sp & \sp & \sp & \sp & \sp \\
60 & .63212 & .55374 & .49658 & .46956 & .44146 & .42506 & .40848 & .39728 & .38516 & .37687 & .36773 & .36119 & .35400 & .34863 & .34242 & .33760 & .33239 & .32866 & .32502 & .32700 & .32193 & .31818 & .31285 & .30806 & .30246 & .29695 & .28923 & .27986 & .26409 & .19387 & \sp & \sp & \sp & \sp & \sp \\
61 & .63212 & .55374 & .49658 & .46956 & .44146 & .42506 & .40848 & .39728 & .38516 & .37687 & .36773 & .36118 & .35399 & .34862 & .34261 & .33772 & .33245 & .32861 & .32476 & .32315 & .32258 & .31838 & .31385 & .30891 & .30384 & .29844 & .29202 & .28388 & .27237 & .25001 & \sp & \sp & \sp & \sp & \sp \\
62 & .63212 & .55374 & .49658 & .46956 & .44146 & .42506 & .40848 & .39728 & .38516 & .37687 & .36773 & .36119 & .35397 & .34861 & .34263 & .33776 & .33244 & .32854 & .32448 & .32224 & .32224 & .31904 & .31412 & .30992 & .30449 & .29980 & .29362 & .28674 & .27689 & .26190 & .19219 & \sp & \sp & \sp & \sp \\
63 & .63212 & .55374 & .49658 & .46956 & .44146 & .42506 & .40848 & .39728 & .38516 & .37687 & .36773 & .36119 & .35397 & .34860 & .34265 & .33788 & .33252 & .32855 & .32440 & .32172 & .32302 & .31893 & .31487 & .31047 & .30561 & .30077 & .29555 & .28915 & .28119 & .26984 & .24781 & \sp & \sp & \sp & \sp \\
64 & .63212 & .55374 & .49658 & .46956 & .44146 & .42506 & .40848 & .39728 & .38516 & .37687 & .36773 & .36119 & .35396 & .34859 & .34264 & .33794 & .33254 & .32852 & .32427 & .32136 & .31928 & .31946 & .31493 & .31127 & .30621 & .30178 & .29652 & .29114 & .28371 & .27462 & .25934 & .19077 & \sp & \sp & \sp \\
65 & .63212 & .55374 & .49658 & .46956 & .44146 & .42506 & .40848 & .39728 & .38516 & .37687 & .36773 & .36119 & .35396 & .34858 & .34264 & .33811 & .33265 & .32857 & .32426 & .32117 & .31848 & .31918 & .31561 & .31158 & .30722 & .30251 & .29788 & .29271 & .28646 & .27862 & .26745 & .24569 & \sp & \sp & \sp \\
66 & .63212 & .55374 & .49658 & .46956 & .44146 & .42506 & .40848 & .39728 & .38516 & .37687 & .36773 & .36119 & .35395 & .34857 & .34262 & .33814 & .33268 & .32858 & .32418 & .32100 & .31789 & .31987 & .31542 & .31218 & .30757 & .30351 & .29853 & .29408 & .28810 & .28142 & .27191 & .25732 & .18922 & \sp & \sp \\
67 & .63212 & .55374 & .49658 & .46956 & .44146 & .42506 & .40848 & .39728 & .38516 & .37687 & .36773 & .36119 & .35395 & .34856 & .34262 & .33815 & .33280 & .32865 & .32423 & .32093 & .31762 & .31634 & .31597 & .31230 & .30839 & .30414 & .29964 & .29508 & .29002 & .28386 & .27616 & .26513 & .24368 & \sp & \sp \\
68 & .63212 & .55374 & .49658 & .46956 & .44146 & .42506 & .40848 & .39728 & .38516 & .37687 & .36773 & .36119 & .35395 & .34856 & .34261 & .33815 & .33285 & .32868 & .32419 & .32085 & .31734 & .31547 & .31564 & .31286 & .30857 & .30496 & .30015 & .29613 & .29109 & .28582 & .27868 & .26983 & .25497 & .18792 & \sp \\
69 & .63212 & .55374 & .49658 & .46956 & .44146 & .42506 & .40848 & .39728 & .38516 & .37687 & .36773 & .36119 & .35396 & .34855 & .34260 & .33814 & .33301 & .32877 & .32426 & .32084 & .31725 & .31497 & .31634 & .31273 & .30919 & .30535 & .30118 & .29685 & .29244 & .28743 & .28138 & .27379 & .26293 & .24172 & \sp \\
70 & .63212 & .55374 & .49658 & .46956 & .44146 & .42506 & .40848 & .39728 & .38516 & .37687 & .36773 & .36119 & .35396 & .34855 & .34259 & .33814 & .33303 & .32881 & .32426 & .32080 & .31710 & .31463 & .31289 & .31318 & .30921 & .30602 & .30161 & .29769 & .29313 & .28886 & .28304 & .27657 & .26734 & .25312 & .18649 \\
\midrule
$\infty$ &
.63212 &  .55374 &  .49658 &  .46956 &  .44146 &  .42506 &  .40848 &  
.39728 &  .38516 &  .37687 &  .36773 &  .36119 &  .35396 &  .34855 &  
.34256 &  .33807 &  .33297 &  .32908 &  .32473 &  .32132 &  .31750 &  
.31450 &  .31110 &  .30842 &  .30539 &  .30295 &  .30022 &  .29801 &  
.29551 &  .29349 \\
\bottomrule
\end{tabular}}
\end{center}
\end{landscape}

\newpage

\newgeometry{margin=6pt}
\begin{landscape}
\section*{Appendix D: $p(n,k) = 1-i(n,k)$ for $n \le 70$ and $p(\infty,k) = 1-i(\infty,k)$
(values are rounded to 5 decimal places)}

\begin{center}
\scalebox{0.52}{
\renewcommand{\sp}{.}
\begin{tabular}{llllllllllllllllllllllllllllllllllll}\toprule 
$n \backslash k$ & 1 & 2 & 3 & 4 & 5 & 6 & 7 & 8 & 9 & 10 & 11 & 12 & 13 & 14 & 15 & 16 & 17 & 18 & 19 & 20 & 21 & 22 & 23 & 24 & 25 & 26 & 27 & 28 & 29 & 30 & 31 & 32 & 33 & 34 & 35 \\ \midrule
2 & .50000 & \sp & \sp & \sp & \sp & \sp & \sp & \sp & \sp & \sp & \sp & \sp & \sp & \sp & \sp & \sp & \sp & \sp & \sp & \sp & \sp & \sp & \sp & \sp & \sp & \sp & \sp & \sp & \sp & \sp & \sp & \sp & \sp & \sp & \sp \\
3 & .33333 & \sp & \sp & \sp & \sp & \sp & \sp & \sp & \sp & \sp & \sp & \sp & \sp & \sp & \sp & \sp & \sp & \sp & \sp & \sp & \sp & \sp & \sp & \sp & \sp & \sp & \sp & \sp & \sp & \sp & \sp & \sp & \sp & \sp & \sp \\
4 & .37500 & .58333 & \sp & \sp & \sp & \sp & \sp & \sp & \sp & \sp & \sp & \sp & \sp & \sp & \sp & \sp & \sp & \sp & \sp & \sp & \sp & \sp & \sp & \sp & \sp & \sp & \sp & \sp & \sp & \sp & \sp & \sp & \sp & \sp & \sp \\
5 & .36667 & .45000 & \sp & \sp & \sp & \sp & \sp & \sp & \sp & \sp & \sp & \sp & \sp & \sp & \sp & \sp & \sp & \sp & \sp & \sp & \sp & \sp & \sp & \sp & \sp & \sp & \sp & \sp & \sp & \sp & \sp & \sp & \sp & \sp & \sp \\
6 & .36806 & .42222 & .63750 & \sp & \sp & \sp & \sp & \sp & \sp & \sp & \sp & \sp & \sp & \sp & \sp & \sp & \sp & \sp & \sp & \sp & \sp & \sp & \sp & \sp & \sp & \sp & \sp & \sp & \sp & \sp & \sp & \sp & \sp & \sp & \sp \\
7 & .36786 & .44841 & .50952 & \sp & \sp & \sp & \sp & \sp & \sp & \sp & \sp & \sp & \sp & \sp & \sp & \sp & \sp & \sp & \sp & \sp & \sp & \sp & \sp & \sp & \sp & \sp & \sp & \sp & \sp & \sp & \sp & \sp & \sp & \sp & \sp \\
8 & .36788 & .44911 & .49963 & .66230 & \sp & \sp & \sp & \sp & \sp & \sp & \sp & \sp & \sp & \sp & \sp & \sp & \sp & \sp & \sp & \sp & \sp & \sp & \sp & \sp & \sp & \sp & \sp & \sp & \sp & \sp & \sp & \sp & \sp & \sp & \sp \\
9 & .36788 & .44576 & .48522 & .55181 & \sp & \sp & \sp & \sp & \sp & \sp & \sp & \sp & \sp & \sp & \sp & \sp & \sp & \sp & \sp & \sp & \sp & \sp & \sp & \sp & \sp & \sp & \sp & \sp & \sp & \sp & \sp & \sp & \sp & \sp & \sp \\
10 & .36788 & .44601 & .50533 & .53230 & .68679 & \sp & \sp & \sp & \sp & \sp & \sp & \sp & \sp & \sp & \sp & \sp & \sp & \sp & \sp & \sp & \sp & \sp & \sp & \sp & \sp & \sp & \sp & \sp & \sp & \sp & \sp & \sp & \sp & \sp & \sp \\
11 & .36788 & .44633 & .50476 & .52233 & .58029 & \sp & \sp & \sp & \sp & \sp & \sp & \sp & \sp & \sp & \sp & \sp & \sp & \sp & \sp & \sp & \sp & \sp & \sp & \sp & \sp & \sp & \sp & \sp & \sp & \sp & \sp & \sp & \sp & \sp & \sp \\
12 & .36788 & .44628 & .50495 & .51473 & .56550 & .70123 & \sp & \sp & \sp & \sp & \sp & \sp & \sp & \sp & \sp & \sp & \sp & \sp & \sp & \sp & \sp & \sp & \sp & \sp & \sp & \sp & \sp & \sp & \sp & \sp & \sp & \sp & \sp & \sp & \sp \\
13 & .36788 & .44625 & .50300 & .53009 & .55395 & .60280 & \sp & \sp & \sp & \sp & \sp & \sp & \sp & \sp & \sp & \sp & \sp & \sp & \sp & \sp & \sp & \sp & \sp & \sp & \sp & \sp & \sp & \sp & \sp & \sp & \sp & \sp & \sp & \sp & \sp \\
14 & .36788 & .44626 & .50327 & .53204 & .55095 & .58355 & .71497 & \sp & \sp & \sp & \sp & \sp & \sp & \sp & \sp & \sp & \sp & \sp & \sp & \sp & \sp & \sp & \sp & \sp & \sp & \sp & \sp & \sp & \sp & \sp & \sp & \sp & \sp & \sp & \sp \\
15 & .36788 & .44626 & .50333 & .53179 & .54534 & .57687 & .62073 & \sp & \sp & \sp & \sp & \sp & \sp & \sp & \sp & \sp & \sp & \sp & \sp & \sp & \sp & \sp & \sp & \sp & \sp & \sp & \sp & \sp & \sp & \sp & \sp & \sp & \sp & \sp & \sp \\
16 & .36788 & .44626 & .50346 & .53151 & .55850 & .56946 & .60501 & .72448 & \sp & \sp & \sp & \sp & \sp & \sp & \sp & \sp & \sp & \sp & \sp & \sp & \sp & \sp & \sp & \sp & \sp & \sp & \sp & \sp & \sp & \sp & \sp & \sp & \sp & \sp & \sp \\
17 & .36788 & .44626 & .50343 & .53024 & .55925 & .56699 & .59374 & .63570 & \sp & \sp & \sp & \sp & \sp & \sp & \sp & \sp & \sp & \sp & \sp & \sp & \sp & \sp & \sp & \sp & \sp & \sp & \sp & \sp & \sp & \sp & \sp & \sp & \sp & \sp & \sp \\
18 & .36788 & .44626 & .50342 & .53021 & .55999 & .56309 & .59101 & .61801 & .73432 & \sp & \sp & \sp & \sp & \sp & \sp & \sp & \sp & \sp & \sp & \sp & \sp & \sp & \sp & \sp & \sp & \sp & \sp & \sp & \sp & \sp & \sp & \sp & \sp & \sp & \sp \\
19 & .36788 & .44626 & .50341 & .53034 & .55944 & .57406 & .58482 & .60903 & .64801 & \sp & \sp & \sp & \sp & \sp & \sp & \sp & \sp & \sp & \sp & \sp & \sp & \sp & \sp & \sp & \sp & \sp & \sp & \sp & \sp & \sp & \sp & \sp & \sp & \sp & \sp \\
20 & .36788 & .44626 & .50342 & .53040 & .55927 & .57575 & .58414 & .60347 & .63266 & .74119 & \sp & \sp & \sp & \sp & \sp & \sp & \sp & \sp & \sp & \sp & \sp & \sp & \sp & \sp & \sp & \sp & \sp & \sp & \sp & \sp & \sp & \sp & \sp & \sp & \sp \\
21 & .36788 & .44626 & .50342 & .53047 & .55833 & .57584 & .58071 & .59974 & .62154 & .65898 & \sp & \sp & \sp & \sp & \sp & \sp & \sp & \sp & \sp & \sp & \sp & \sp & \sp & \sp & \sp & \sp & \sp & \sp & \sp & \sp & \sp & \sp & \sp & \sp & \sp \\
22 & .36788 & .44626 & .50342 & .53046 & .55840 & .57609 & .59062 & .59609 & .61809 & .64221 & .74847 & \sp & \sp & \sp & \sp & \sp & \sp & \sp & \sp & \sp & \sp & \sp & \sp & \sp & \sp & \sp & \sp & \sp & \sp & \sp & \sp & \sp & \sp & \sp & \sp \\
23 & .36788 & .44626 & .50342 & .53045 & .55841 & .57571 & .59175 & .59528 & .61292 & .63269 & .66823 & \sp & \sp & \sp & \sp & \sp & \sp & \sp & \sp & \sp & \sp & \sp & \sp & \sp & \sp & \sp & \sp & \sp & \sp & \sp & \sp & \sp & \sp & \sp & \sp \\
24 & .36788 & .44626 & .50342 & .53044 & .55850 & .57550 & .59251 & .59274 & .61131 & .62738 & .65323 & .75382 & \sp & \sp & \sp & \sp & \sp & \sp & \sp & \sp & \sp & \sp & \sp & \sp & \sp & \sp & \sp & \sp & \sp & \sp & \sp & \sp & \sp & \sp & \sp \\
25 & .36788 & .44626 & .50342 & .53044 & .55853 & .57481 & .59234 & .60138 & .60810 & .62420 & .64253 & .67658 & \sp & \sp & \sp & \sp & \sp & \sp & \sp & \sp & \sp & \sp & \sp & \sp & \sp & \sp & \sp & \sp & \sp & \sp & \sp & \sp & \sp & \sp & \sp \\
26 & .36788 & .44626 & .50342 & .53044 & .55857 & .57477 & .59253 & .60288 & .60801 & .62030 & .63829 & .66065 & .75948 & \sp & \sp & \sp & \sp & \sp & \sp & \sp & \sp & \sp & \sp & \sp & \sp & \sp & \sp & \sp & \sp & \sp & \sp & \sp & \sp & \sp & \sp \\
27 & .36788 & .44626 & .50342 & .53044 & .55855 & .57483 & .59212 & .60337 & .60566 & .61883 & .63364 & .65139 & .68398 & \sp & \sp & \sp & \sp & \sp & \sp & \sp & \sp & \sp & \sp & \sp & \sp & \sp & \sp & \sp & \sp & \sp & \sp & \sp & \sp & \sp & \sp \\
28 & .36788 & .44626 & .50342 & .53044 & .55855 & .57486 & .59198 & .60367 & .61356 & .61654 & .63197 & .64542 & .66939 & .76383 & \sp & \sp & \sp & \sp & \sp & \sp & \sp & \sp & \sp & \sp & \sp & \sp & \sp & \sp & \sp & \sp & \sp & \sp & \sp & \sp & \sp \\
29 & .36788 & .44626 & .50342 & .53044 & .55854 & .57491 & .59139 & .60354 & .61470 & .61635 & .62851 & .64220 & .65908 & .69069 & \sp & \sp & \sp & \sp & \sp & \sp & \sp & \sp & \sp & \sp & \sp & \sp & \sp & \sp & \sp & \sp & \sp & \sp & \sp & \sp & \sp \\
30 & .36788 & .44626 & .50342 & .53044 & .55854 & .57493 & .59139 & .60356 & .61548 & .61452 & .62790 & .63948 & .65456 & .67548 & .76847 & \sp & \sp & \sp & \sp & \sp & \sp & \sp & \sp & \sp & \sp & \sp & \sp & \sp & \sp & \sp & \sp & \sp & \sp & \sp & \sp \\
31 & .36788 & .44626 & .50342 & .53044 & .55854 & .57496 & .59139 & .60325 & .61552 & .62155 & .62578 & .63721 & .64978 & .66630 & .69670 & \sp & \sp & \sp & \sp & \sp & \sp & \sp & \sp & \sp & \sp & \sp & \sp & \sp & \sp & \sp & \sp & \sp & \sp & \sp & \sp \\
32 & .36788 & .44626 & .50342 & .53044 & .55854 & .57495 & .59144 & .60309 & .61574 & .62298 & .62591 & .63478 & .64798 & .66064 & .68259 & .77205 & \sp & \sp & \sp & \sp & \sp & \sp & \sp & \sp & \sp & \sp & \sp & \sp & \sp & \sp & \sp & \sp & \sp & \sp & \sp \\
33 & .36788 & .44626 & .50342 & .53044 & .55854 & .57495 & .59146 & .60263 & .61553 & .62350 & .62416 & .63408 & .64518 & .65682 & .67275 & .70230 & \sp & \sp & \sp & \sp & \sp & \sp & \sp & \sp & \sp & \sp & \sp & \sp & \sp & \sp & \sp & \sp & \sp & \sp & \sp \\
34 & .36788 & .44626 & .50342 & .53044 & .55854 & .57494 & .59150 & .60259 & .61555 & .62382 & .63083 & .63251 & .64387 & .65401 & .66798 & .68769 & .77596 & \sp & \sp & \sp & \sp & \sp & \sp & \sp & \sp & \sp & \sp & \sp & \sp & \sp & \sp & \sp & \sp & \sp & \sp \\
35 & .36788 & .44626 & .50342 & .53044 & .55854 & .57494 & .59152 & .60261 & .61525 & .62386 & .63187 & .63255 & .64163 & .65219 & .66310 & .67861 & .70733 & \sp & \sp & \sp & \sp & \sp & \sp & \sp & \sp & \sp & \sp & \sp & \sp & \sp & \sp & \sp & \sp & \sp & \sp \\
36 & .36788 & .44626 & .50342 & .53044 & .55854 & .57494 & .59154 & .60263 & .61515 & .62394 & .63264 & .63110 & .64139 & .64991 & .66121 & .67312 & .69366 & .77904 & \sp & \sp & \sp & \sp & \sp & \sp & \sp & \sp & \sp & \sp & \sp & \sp & \sp & \sp & \sp & \sp & \sp \\
37 & .36788 & .44626 & .50342 & .53044 & .55854 & .57494 & .59153 & .60267 & .61475 & .62377 & .63277 & .63714 & .63984 & .64863 & .65844 & .66909 & .68406 & .71206 & \sp & \sp & \sp & \sp & \sp & \sp & \sp & \sp & \sp & \sp & \sp & \sp & \sp & \sp & \sp & \sp & \sp \\
38 & .36788 & .44626 & .50342 & .53044 & .55854 & .57494 & .59153 & .60268 & .61473 & .62374 & .63301 & .63841 & .64012 & .64691 & .65733 & .66617 & .67933 & .69799 & .78235 & \sp & \sp & \sp & \sp & \sp & \sp & \sp & \sp & \sp & \sp & \sp & \sp & \sp & \sp & \sp & \sp \\
39 & .36788 & .44626 & .50342 & .53044 & .55854 & .57494 & .59152 & .60271 & .61472 & .62351 & .63294 & .63895 & .63870 & .64663 & .65509 & .66430 & .67450 & .68911 & .71639 & \sp & \sp & \sp & \sp & \sp & \sp & \sp & \sp & \sp & \sp & \sp & \sp & \sp & \sp & \sp & \sp \\
40 & .36788 & .44626 & .50342 & .53044 & .55854 & .57494 & .59152 & .60272 & .61475 & .62339 & .63302 & .63932 & .64442 & .64547 & .65440 & .66238 & .67233 & .68375 & .70309 & .78502 & \sp & \sp & \sp & \sp & \sp & \sp & \sp & \sp & \sp & \sp & \sp & \sp & \sp & \sp & \sp \\
41 & .36788 & .44626 & .50342 & .53044 & .55854 & .57494 & .59152 & .60274 & .61476 & .62306 & .63282 & .63940 & .64546 & .64561 & .65278 & .66088 & .66959 & .67962 & .69373 & .72046 & \sp & \sp & \sp & \sp & \sp & \sp & \sp & \sp & \sp & \sp & \sp & \sp & \sp & \sp & \sp \\
42 & .36788 & .44626 & .50342 & .53044 & .55854 & .57494 & .59152 & .60273 & .61480 & .62302 & .63281 & .63950 & .64619 & .64442 & .65274 & .65928 & .66845 & .67659 & .68907 & .70688 & .78791 & \sp & \sp & \sp & \sp & \sp & \sp & \sp & \sp & \sp & \sp & \sp & \sp & \sp & \sp \\
43 & .36788 & .44626 & .50342 & .53044 & .55854 & .57494 & .59152 & .60273 & .61481 & .62303 & .63259 & .63946 & .64637 & .64971 & .65157 & .65850 & .66646 & .67461 & .68431 & .69814 & .72422 & \sp & \sp & \sp & \sp & \sp & \sp & \sp & \sp & \sp & \sp & \sp & \sp & \sp & \sp \\
44 & .36788 & .44626 & .50342 & .53044 & .55854 & .57494 & .59152 & .60273 & .61483 & .62304 & .63250 & .63948 & .64664 & .65085 & .65188 & .65726 & .66550 & .67277 & .68190 & .69289 & .71129 & .79024 & \sp & \sp & \sp & \sp & \sp & \sp & \sp & \sp & \sp & \sp & \sp & \sp & \sp \\
45 & .36788 & .44626 & .50342 & .53044 & .55854 & .57494 & .59152 & .60272 & .61484 & .62306 & .63220 & .63932 & .64662 & .65141 & .65071 & .65715 & .66396 & .67155 & .67921 & .68878 & .70220 & .72780 & \sp & \sp & \sp & \sp & \sp & \sp & \sp & \sp & \sp & \sp & \sp & \sp & \sp \\
46 & .36788 & .44626 & .50342 & .53044 & .55854 & .57494 & .59152 & .60272 & .61485 & .62307 & .63218 & .63928 & .64673 & .65180 & .65575 & .65622 & .66355 & .66984 & .67797 & .68574 & .69752 & .71461 & .79280 & \sp & \sp & \sp & \sp & \sp & \sp & \sp & \sp & \sp & \sp & \sp & \sp \\
47 & .36788 & .44626 & .50342 & .53044 & .55854 & .57494 & .59152 & .60272 & .61485 & .62310 & .63217 & .63910 & .64664 & .65192 & .65671 & .65645 & .66230 & .66889 & .67606 & .68356 & .69288 & .70603 & .73110 & \sp & \sp & \sp & \sp & \sp & \sp & \sp & \sp & \sp & \sp & \sp & \sp \\
48 & .36788 & .44626 & .50342 & .53044 & .55854 & .57494 & .59152 & .60272 & .61484 & .62311 & .63219 & .63901 & .64665 & .65205 & .65739 & .65544 & .66238 & .66771 & .67526 & .68169 & .69042 & .70093 & .71851 & .79487 & \sp & \sp & \sp & \sp & \sp & \sp & \sp & \sp & \sp & \sp & \sp \\
49 & .36788 & .44626 & .50342 & .53044 & .55854 & .57494 & .59152 & .60272 & .61484 & .62312 & .63219 & .63875 & .64650 & .65205 & .65764 & .66013 & .66146 & .66723 & .67358 & .68044 & .68757 & .69680 & .70965 & .73427 & \sp & \sp & \sp & \sp & \sp & \sp & \sp & \sp & \sp & \sp & \sp \\
50 & .36788 & .44626 & .50342 & .53044 & .55854 & .57494 & .59152 & .60272 & .61484 & .62313 & .63222 & .63871 & .64647 & .65209 & .65792 & .66119 & .66178 & .66625 & .67300 & .67907 & .68629 & .69380 & .70500 & .72145 & .79714 & \sp & \sp & \sp & \sp & \sp & \sp & \sp & \sp & \sp & \sp \\
51 & .36788 & .44626 & .50342 & .53044 & .55854 & .57494 & .59152 & .60272 & .61484 & .62314 & .63223 & .63872 & .64629 & .65202 & .65793 & .66175 & .66078 & .66625 & .67182 & .67798 & .68444 & .69150 & .70046 & .71307 & .73722 & \sp & \sp & \sp & \sp & \sp & \sp & \sp & \sp & \sp & \sp \\
52 & .36788 & .44626 & .50342 & .53044 & .55854 & .57494 & .59152 & .60272 & .61484 & .62314 & .63225 & .63872 & .64622 & .65201 & .65805 & .66211 & .66529 & .66551 & .67156 & .67670 & .68359 & .68961 & .69795 & .70801 & .72492 & .79901 & \sp & \sp & \sp & \sp & \sp & \sp & \sp & \sp & \sp \\
53 & .36788 & .44626 & .50342 & .53044 & .55854 & .57494 & .59152 & .60272 & .61483 & .62313 & .63225 & .63873 & .64599 & .65187 & .65801 & .66229 & .66619 & .66576 & .67059 & .67605 & .68215 & .68824 & .69512 & .70394 & .71626 & .74005 & \sp & \sp & \sp & \sp & \sp & \sp & \sp & \sp & \sp \\
54 & .36788 & .44626 & .50342 & .53044 & .55854 & .57494 & .59152 & .60272 & .61483 & .62313 & .63227 & .63874 & .64596 & .65183 & .65805 & .66244 & .66685 & .66489 & .67073 & .67515 & .68143 & .68692 & .69368 & .70100 & .71167 & .72758 & .80105 & \sp & \sp & \sp & \sp & \sp & \sp & \sp & \sp \\
55 & .36788 & .44626 & .50342 & .53044 & .55854 & .57494 & .59152 & .60272 & .61483 & .62313 & .63227 & .63876 & .64595 & .65169 & .65796 & .66246 & .66711 & .66911 & .66995 & .67485 & .68016 & .68602 & .69185 & .69861 & .70722 & .71934 & .74270 & \sp & \sp & \sp & \sp & \sp & \sp & \sp & \sp \\
56 & .36788 & .44626 & .50342 & .53044 & .55854 & .57494 & .59152 & .60272 & .61484 & .62313 & .63228 & .63877 & .64596 & .65161 & .65795 & .66253 & .66738 & .67009 & .67030 & .67405 & .67977 & .68475 & .69094 & .69661 & .70467 & .71439 & .73068 & .80273 & \sp & \sp & \sp & \sp & \sp & \sp & \sp \\
57 & .36788 & .44626 & .50342 & .53044 & .55854 & .57494 & .59152 & .60272 & .61484 & .62313 & .63228 & .63879 & .64596 & .65141 & .65782 & .66249 & .66744 & .67063 & .66942 & .67412 & .67884 & .68399 & .68956 & .69526 & .70189 & .71034 & .72224 & .74526 & \sp & \sp & \sp & \sp & \sp & \sp & \sp \\
58 & .36788 & .44626 & .50342 & .53044 & .55854 & .57494 & .59152 & .60272 & .61484 & .62313 & .63228 & .63880 & .64598 & .65137 & .65779 & .66250 & .66758 & .67099 & .67350 & .67350 & .67870 & .68298 & .68894 & .69391 & .70033 & .70744 & .71766 & .73309 & .80459 & \sp & \sp & \sp & \sp & \sp & \sp \\
59 & .36788 & .44626 & .50342 & .53044 & .55854 & .57494 & .59152 & .60272 & .61484 & .62313 & .63227 & .63881 & .64599 & .65137 & .65764 & .66242 & .66755 & .67118 & .67436 & .67377 & .67790 & .68255 & .68764 & .69297 & .69845 & .70503 & .71329 & .72500 & .74767 & \sp & \sp & \sp & \sp & \sp & \sp \\
60 & .36788 & .44626 & .50342 & .53044 & .55854 & .57494 & .59152 & .60272 & .61484 & .62313 & .63227 & .63881 & .64600 & .65137 & .65758 & .66240 & .66761 & .67134 & .67498 & .67300 & .67807 & .68182 & .68715 & .69194 & .69754 & .70305 & .71077 & .72014 & .73591 & .80613 & \sp & \sp & \sp & \sp & \sp \\
61 & .36788 & .44626 & .50342 & .53044 & .55854 & .57494 & .59152 & .60272 & .61484 & .62313 & .63227 & .63882 & .64601 & .65138 & .65739 & .66228 & .66755 & .67139 & .67524 & .67685 & .67742 & .68162 & .68615 & .69109 & .69616 & .70156 & .70798 & .71612 & .72763 & .74999 & \sp & \sp & \sp & \sp & \sp \\
62 & .36788 & .44626 & .50342 & .53044 & .55854 & .57494 & .59152 & .60272 & .61484 & .62313 & .63227 & .63881 & .64603 & .65139 & .65737 & .66224 & .66756 & .67146 & .67552 & .67776 & .67776 & .68096 & .68588 & .69008 & .69551 & .70020 & .70638 & .71326 & .72311 & .73810 & .80781 & \sp & \sp & \sp & \sp \\
63 & .36788 & .44626 & .50342 & .53044 & .55854 & .57494 & .59152 & .60272 & .61484 & .62313 & .63227 & .63881 & .64603 & .65140 & .65735 & .66212 & .66748 & .67145 & .67560 & .67828 & .67698 & .68107 & .68513 & .68953 & .69439 & .69923 & .70445 & .71085 & .71881 & .73016 & .75219 & \sp & \sp & \sp & \sp \\
64 & .36788 & .44626 & .50342 & .53044 & .55854 & .57494 & .59152 & .60272 & .61484 & .62313 & .63227 & .63881 & .64604 & .65141 & .65736 & .66206 & .66746 & .67148 & .67573 & .67864 & .68072 & .68054 & .68507 & .68873 & .69379 & .69822 & .70348 & .70886 & .71629 & .72538 & .74066 & .80923 & \sp & \sp & \sp \\
65 & .36788 & .44626 & .50342 & .53044 & .55854 & .57494 & .59152 & .60272 & .61484 & .62313 & .63227 & .63881 & .64604 & .65142 & .65736 & .66189 & .66735 & .67143 & .67574 & .67883 & .68152 & .68082 & .68439 & .68842 & .69278 & .69749 & .70212 & .70729 & .71354 & .72138 & .73255 & .75431 & \sp & \sp & \sp \\
66 & .36788 & .44626 & .50342 & .53044 & .55854 & .57494 & .59152 & .60272 & .61484 & .62313 & .63227 & .63881 & .64605 & .65143 & .65738 & .66186 & .66732 & .67142 & .67582 & .67900 & .68211 & .68013 & .68458 & .68782 & .69243 & .69649 & .70147 & .70592 & .71190 & .71858 & .72809 & .74268 & .81078 & \sp & \sp \\
67 & .36788 & .44626 & .50342 & .53044 & .55854 & .57494 & .59152 & .60272 & .61484 & .62313 & .63227 & .63881 & .64605 & .65144 & .65738 & .66185 & .66720 & .67135 & .67577 & .67907 & .68238 & .68366 & .68403 & .68770 & .69161 & .69586 & .70036 & .70492 & .70998 & .71614 & .72384 & .73487 & .75632 & \sp & \sp \\
68 & .36788 & .44626 & .50342 & .53044 & .55854 & .57494 & .59152 & .60272 & .61484 & .62313 & .63227 & .63881 & .64605 & .65144 & .65739 & .66185 & .66715 & .67132 & .67581 & .67915 & .68266 & .68453 & .68436 & .68714 & .69143 & .69504 & .69985 & .70387 & .70891 & .71418 & .72132 & .73017 & .74503 & .81208 & \sp \\
69 & .36788 & .44626 & .50342 & .53044 & .55854 & .57494 & .59152 & .60272 & .61484 & .62313 & .63227 & .63881 & .64604 & .65145 & .65740 & .66186 & .66699 & .67123 & .67574 & .67916 & .68275 & .68503 & .68366 & .68727 & .69081 & .69465 & .69882 & .70315 & .70756 & .71257 & .71862 & .72621 & .73707 & .75828 & \sp \\
70 & .36788 & .44626 & .50342 & .53044 & .55854 & .57494 & .59152 & .60272 & .61484 & .62313 & .63227 & .63881 & .64604 & .65145 & .65741 & .66186 & .66697 & .67119 & .67574 & .67920 & .68290 & .68537 & .68711 & .68682 & .69079 & .69398 & .69839 & .70231 & .70687 & .71114 & .71696 & .72343 & .73266 & .74688 & .81351 \\
\midrule
$\infty$ 
& .36788 & .44626 & .50342 & .53044 & .55854 & .57494 & .59152 & .60272 & .61484 & .62313 & .63227 & .63881 & .64604 & .65145 & 
.65744 & .66193 & .66703 & .67092 & .67527 & .67868 & .68250 & 
.68550 & .68890 & .69158 & .69461 & .69705 & .69978 & .70199 & 
.70449 & .70651 \\
\bottomrule
\end{tabular}}
\end{center}
\end{landscape}

\newpage
\newgeometry{margin=1.35in}

\section*{Acknowledgements}

The authors thank Sean Eberhard for helpful comments
and Jasdeep Kochhar for helpful comments and corrections.

\def\cprime{$'$} \def\Dbar{\leavevmode\lower.6ex\hbox to 0pt{\hskip-.23ex
  \accent"16\hss}D} \def\cprime{$'$}
\providecommand{\bysame}{\leavevmode\hbox to3em{\hrulefill}\thinspace}
\providecommand{\MR}{\relax\ifhmode\unskip\space\fi MR }
\providecommand{\MRhref}[2]{%
  \href{http://www.ams.org/mathscinet-getitem?mr=#1}{#2}
}
\providecommand{\href}[2]{#2}

\end{document}